\newtheorem{theorem}{Theorem}[section]
\newtheorem{lemma}[theorem]{Lemma}
\newtheorem{proposition}[theorem]{Proposition}
\newtheorem{corollary}[theorem]{Corollary}
\newtheorem{question}[theorem]{Question}
\newtheorem{Example}[theorem]{Example}
\newtheorem{fact}[theorem]{Fact}
\newtheorem{rmk}[theorem]{Remark}
\renewcommand{\leq}{\leqslant}
\renewcommand{\geq}{\geqslant}
\long\def\@savemarbox#1#2{\global\setbox#1\vtop{\hsize\marginparwidth 
  \@parboxrestore\tiny\raggedright #2}}
\renewcommand*\env@matrix[1][\arraystretch]{%
\edef\arraystretch{#1}%
\hskip -\arraycolsep
\let\@ifnextchar\new@ifnextchar
\array{*\c@MaxMatrixCols c}}
\date{\today}
\title{On regular subgroups of $\mathsf{SL}_3(\mathbb{R})$}
\date{\today}
\author{Sami Douba}
\author{Konstantinos Tsouvalas}
\begin{document}

\frenchspacing

\maketitle

\begin{abstract} Motivated by a question of M. Kapovich, we show that the $\mathbb{Z}^2$ subgroups of $\mathsf{SL}_3(\mathbb{R})$ that are {\em regular} in the language of Kapovich--Leeb--Porti, or {\em divergent} in the sense of Guichard--Wienhard, are precisely the lattices in minimal horospherical subgroups. This rules out any relative Anosov subgroups of $\mathsf{SL}_3(\mathbb{R})$ that are not in fact Gromov-hyperbolic. By work of Oh, it also follows that a Zariski-dense discrete subgroup $\Gamma$ of $\mathsf{SL}_3(\mathbb{R})$ contains a regular $\mathbb{Z}^2$ if and only if~$\Gamma$ is commensurable to a conjugate of $\mathsf{SL}_3(\mathbb{Z})$. In particular, a Zariski-dense regular subgroup of $\mathsf{SL}_3(\mathbb{R})$ contains no $\mathbb{Z}^2$ subgroups. \end{abstract}

\section{Introduction}

Our discussion is motivated by the following question of M. Kapovich, also considered by D. Long and A. Reid. 

\begin{question}\label{kapovich} \textup{(}\cite[Prob.~3.3]{kapovich2023list}\textup{)}
Is there a subgroup of $\mathsf{SL}_3(\mathbb{Z})$ isomorphic to $\mathbb{Z}^2 * \mathbb{Z}$?
\end{question}

We remark that $\mathsf{SO}_{3,1}(\mathbb{Z})$ contains copies of $\mathbb{Z}^2 * \mathbb{Z}$, and hence so does $\mathsf{SL}_n(\mathbb{Z})$ for each $n \geq 4$. While we do not resolve Question \ref{kapovich}, we establish the following.

\begin{theorem}\label{noregular}
There is no regular subgroup of $\mathsf{SL}_3(\mathbb{R})$ isomorphic to $\mathbb{Z}^2 * \mathbb{Z}$. 
\end{theorem}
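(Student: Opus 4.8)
The plan is to exploit the classification of regular $\mathbb{Z}^2$ subgroups and reduce the problem to an affine-geometric statement inside $\mathsf{SL}_2(\mathbb{R})$. Since regularity (divergence) passes to subgroups, if $\Gamma\cong\mathbb{Z}^2*\mathbb{Z}$ were regular then its $\mathbb{Z}^2$ factor $A$ would be a regular $\mathbb{Z}^2$, hence by the classification a lattice in a minimal horospherical subgroup $U$. The minimal horospherical subgroups of $\mathsf{SL}_3(\mathbb{R})$ come in two classes, swapped by the involution $g\mapsto (g^{-1})^{\mathsf{T}}$ (which preserves singular values, hence regularity, and the isomorphism type of $\Gamma$); so after conjugating and possibly applying this involution I may assume $U=U_1$ is the group of unipotents $I+e_1\phi^{\mathsf{T}}$ with $\phi\in\langle e_2^*,e_3^*\rangle$, fixing the line $\ell_1=[e_1]$. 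Writing $g$ for a generator of the $\mathbb{Z}$ factor and $B=gAg^{-1}\subset U_2:=gU_1g^{-1}$, the Kurosh normal form gives $\langle A,B\rangle=A*B\cong\mathbb{Z}^2*\mathbb{Z}^2$, which is again regular. Because a minimal horospherical subgroup of this type is determined by the line it fixes, the line $\ell_2=g\ell_1$ must differ from $\ell_1$: otherwise $U_2=U_1$ and $\langle A,B\rangle$ would be abelian, contradicting $\langle A,B\rangle\cong\mathbb{Z}^2*\mathbb{Z}^2$.

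Next I would extract an invariant plane. Each $U_i$ fixes $\ell_i$, preserves every plane containing $\ell_i$, and acts trivially on $\mathbb{R}^3/\ell_i$. Hence both $A$ and $B$ preserve the plane $W=\ell_1\oplus\ell_2$ and act trivially on $\mathbb{R}^3/W$, producing a homomorphism $\rho\colon\langle A,B\rangle\to\mathsf{SL}(W)\cong\mathsf{SL}_2(\mathbb{R})$ under which $\rho(A)$ and $\rho(B)$ are unipotent, upper- and lower-triangular in a basis of $W$ adapted to $\ell_1,\ell_2$. The kernel of $\rho$ consists of elements acting trivially on both $W$ and $\mathbb{R}^3/W$, which form an abelian group; being a normal amenable subgroup of the acylindrically hyperbolic group $\mathbb{Z}^2*\mathbb{Z}^2$, it is trivial, so $\rho$ is injective. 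In particular $\rho(A)\cong\mathbb{Z}^2$ embeds in the one-parameter unipotent subgroup $(\mathbb{R},+)$, and a rank-two subgroup of $\mathbb{R}$ is never discrete; thus $\rho(\langle A,B\rangle)$ is a non-discrete subgroup of $\mathsf{SL}_2(\mathbb{R})$. As it is also Zariski dense (being generated by nontrivial upper and lower unipotent subgroups), its closure is all of $\mathsf{SL}_2(\mathbb{R})$.

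The final step is to manufacture from this a divergent sequence along which a singular-value gap stays bounded, contradicting regularity. Viewing $\langle A,B\rangle\subset\mathsf{SL}_2(\mathbb{R})\ltimes\mathbb{R}^2=\mathrm{Aff}(W)$ and writing an element as a pair $(C,v)$ with linear part $C=\rho(\gamma)$ and translation $v$, a direct computation of the Cartan projection shows that for such an element regarded in $\mathsf{SL}_3(\mathbb{R})$ one has $\sigma_1\asymp\max(\|C\|,|v|)$ and $\sigma_3^{-1}\asymp\max(\|C\|,|C^{-1}v|)$. From this one reads off that both gaps $\sigma_1/\sigma_2$ and $\sigma_2/\sigma_3$ diverge along a divergent sequence unless $\|C\|$ and the affine fixed point $(I-C)^{-1}v$ both tend to infinity at comparable rates, in which exceptional regime the gap $\sigma_2/\sigma_3$ stays bounded. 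Since $\rho(\langle A,B\rangle)$ is non-discrete while $\langle A,B\rangle$ is discrete in $\mathsf{SL}_3(\mathbb{R})$, I can choose distinct elements $\delta_n\in\langle A,B\rangle$ with $\rho(\delta_n)$ hyperbolic and $\rho(\delta_n)\to I$; discreteness forces the translation parts, and hence the fixed points $x_n=(I-\rho(\delta_n))^{-1}v(\delta_n)$, to leave every compact set. Replacing $\delta_n$ by a power $\delta_n^{k_n}$ with $k_n$ tuned so that the top singular value of $\rho(\delta_n)^{k_n}$ is comparable to $|x_n|$ then yields a divergent sequence realizing the exceptional regime, with $\sigma_2/\sigma_3$ bounded.

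I expect the principal obstacle to lie precisely in this last step: controlling the interplay of linear and translation parts in the Cartan projection, verifying that near-identity hyperbolic elements with escaping fixed points genuinely exist and that $k_n$ can be chosen to balance the two scales, and disposing of the residual cases in which the chosen $\delta_n$ have parabolic or elliptic linear part. By contrast, the reduction to a non-discrete subgroup of $\mathsf{SL}_2(\mathbb{R})$ through the invariant plane $W$ and the injectivity of $\rho$ should be routine.
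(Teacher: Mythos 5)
Your overall route is genuinely different from the paper's: the paper deduces Theorem \ref{noregular} from Theorem \ref{horo} combined with Oh's arithmeticity results (a Zariski-dense discrete subgroup with a $\Gamma$-compact minimal horospherical subgroup is commensurable to a conjugate of $\mathsf{SL}_3(\mathbb{Z})$) and Kazhdan's property (T), whereas you attempt a self-contained geometric contradiction. Your reduction steps are correct and are not in the paper: regularity passes to the $\mathbb{Z}^2$ factor $A$, Theorem \ref{horo} makes $A$ a lattice in a minimal horospherical subgroup, the normal form in $\mathbb{Z}^2 * \mathbb{Z}$ gives $\langle A, gAg^{-1}\rangle \cong \mathbb{Z}^2 * \mathbb{Z}^2$, both factors preserve $W=\ell_1\oplus\ell_2$ and act trivially on $\mathbb{R}^3/W$, the kernel of the restriction $\rho$ to $\mathsf{SL}(W)$ is an abelian normal subgroup of $\mathbb{Z}^2 * \mathbb{Z}^2$ and hence trivial, and therefore $\rho(A)\cong\mathbb{Z}^2$ lies in a one-parameter unipotent group, forcing $\rho(\langle A,gAg^{-1}\rangle)$ to be non-discrete and dense in $\mathsf{SL}_2(\mathbb{R})$.

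The final step, however, contains a genuine gap, and the specific claim it rests on is false. Write $C_n=\rho(\delta_n)$ with eigenvalues $\lambda_n^{\pm1}$ and unit eigenvectors $u_n^{\pm}$ (which you may keep uniformly transverse), and expand the fixed point as $x_n=a_nu_n^{+}+b_nu_n^{-}$. Then $\delta_n^{k}=\bigl(C_n^{k},(I-C_n^{k})x_n\bigr)$, and your own estimates give, with $L=\lambda_n^{k}$, that $\sigma_1(\delta_n^{k})\asymp\max\bigl(L,(L-1)(|a_n|+|b_n|/L)\bigr)$ and $\sigma_1(\delta_n^{-k})\asymp\max\bigl(L,(L-1)(|a_n|/L+|b_n|)\bigr)$. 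If $|a_n|\asymp|b_n|\to\infty$, i.e.\ the fixed points escape in directions uniformly transverse to both eigenlines, these two quantities are comparable, say both $\asymp R$, for \emph{every} choice of $k$; hence both gaps $\sigma_1/\sigma_2=\sigma_1(\delta_n^{k})^2/\sigma_1(\delta_n^{-k})$ and $\sigma_2/\sigma_3=\sigma_1(\delta_n^{-k})^2/\sigma_1(\delta_n^{k})$ are $\asymp R$, and every divergent sequence of powers is regular. In particular, in your ``exceptional regime'' $\|C_n^{k_n}\|\asymp|x_n|$ both gaps are $\asymp|x_n|^{2}\to\infty$ rather than bounded. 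A bounded gap along powers actually forces $\min(|a_n|,|b_n|)=O(1)$, i.e.\ fixed points staying within bounded distance of one eigenline; nothing in your argument produces such elements. Density of the linear part lets you prescribe $C_n$, but since $\rho$ is injective the translation part $v(\delta_n)$ is then completely determined and uncontrolled, so you cannot steer $x_n$; discreteness only yields $|x_n|\to\infty$. So the contradiction is not established. The configuration you reach can be killed, but by a different mechanism: writing $A$ and $gAg^{-1}$ as lattices $\Lambda_A,\Lambda_B\subset\mathbb{R}^2$ of pairs (shear, translation), two applications of Minkowski's theorem produce infinitely many pairs $a_n\in A$, $b_n\in gAg^{-1}$ whose shear--translation products are uniformly controlled, and a direct computation shows the commutators $[a_n,b_n]$ then form an infinite bounded set of distinct elements of $A * gAg^{-1}$, contradicting discreteness (rather than regularity); alternatively, one falls back on the paper's route via Oh's theorem. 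As written, though, the last step of your proposal fails.
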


{\em Regularity} (defined with respect to a parabolic subgroup) is a form of discreteness for subgroups of, or representations into, noncompact semisimple Lie groups that coincides with discreteness in the rank-one setting, but is strictly stronger in higher rank. These subgroups already appear in work of Benoist \cite{benoist1997proprietes}, and are the {\em divergent} subgroups of Guichard and Wienhard \cite{GW12}; see Section \ref{prelim} for the precise definition. 
For instance, the aforementioned copies of $\mathbb{Z}^2 * \mathbb{Z}$ in $\mathsf{SL}_n(\mathbb{Z})$ for $n\geq 4$ are regular, as are Anosov representations of Gromov-hyperbolic groups \cite{Labourie, GW12} and, more generally, relative Anosov representations of relatively hyperbolic groups \cite{KL,Zhu}. On the other hand, a lattice in a Cartan subgroup of $\mathsf{SL}_3(\mathbb{R})$ is not regular. Indeed, we show that regular $\mathbb{Z}^2$ subgroups of $\mathsf{SL}_3(\mathbb{R})$ are of a very particular form. Recall that a (resp., {\em minimal, maximal}) {\em horospherical} subgroup of $\mathsf{SL}_3(\mathbb{R})$ is by definition the unipotent radical of a (resp., maximal, minimal) proper parabolic subgroup of the latter.

\begin{theorem}\label{horo} A representation $\rho:\mathbb{Z}^2\rightarrow \mathsf{SL}_3(\mathbb{R})$ is regular if and only if $\rho(\mathbb{Z}^2)$ is a lattice in a minimal horospherical subgroup of $\mathsf{SL}_3(\mathbb{R})$.  \end{theorem}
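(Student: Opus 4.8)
The plan is to prove both implications by analyzing the singular values of elements through their (real) Jordan decomposition.

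For the ``if'' direction, suppose $\rho(\mathbb{Z}^2)$ is a lattice in a minimal horospherical subgroup $U$. After conjugation, $U$ consists of the matrices $I + N$ with $N = e_1 w^{\mathsf{T}}$, $w \in \langle e_2, e_3\rangle$, so that $N$ has rank at most one and $N^2 = 0$. A direct computation of the singular values of $I+N$ yields $\sigma_2 = 1$ and $\sigma_1 = \sigma_3^{-1} \asymp \|N\|$, whence $\alpha_1(\mu(I+N)) = \alpha_2(\mu(I+N)) = \log\|N\| + o(1)$. Since $\rho(\mathbb{Z}^2)$ is a lattice in $U \cong \mathbb{R}^2$, any sequence leaving compact sets has $\|N\| \to \infty$, so both simple roots diverge and $\rho$ is regular.

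The substance is the converse. Regularity forces $\rho$ to be proper, hence faithful with discrete image $A := \rho(\mathbb{Z}^2) \cong \mathbb{Z}^2$. Because $A$ is abelian, the real Jordan decomposition is functorial on it: the maps $\gamma \mapsto e_\gamma$ (elliptic), $\gamma \mapsto h_\gamma$ (hyperbolic), $\gamma \mapsto u_\gamma$ (unipotent) are commuting homomorphisms, and the $h_\gamma$ are simultaneously $\mathbb{R}$-diagonalizable. I would first show $A$ is unipotent. To exclude hyperbolic parts, observe that the log-moduli of eigenvalues define a homomorphism $\bar\phi \colon A \to \mathfrak{a}$; if $\bar\phi \neq 0$ one produces a sequence $\gamma_k \to \infty$ whose Cartan projection stays within bounded distance of a wall, collapsing a simple root. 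If $\bar\phi$ has rank two, $A$ behaves like a lattice in the split Cartan subgroup, for which such wall-hugging sequences exist by Dirichlet--Kronecker; the rank-one case is treated by a ``strip'' argument that freezes the unipotent contribution while sending the split part to infinity along a wall. Elliptic parts are handled by a centralizer argument: a genuine rotation commutes only with unipotents it annihilates, so any element with nontrivial rotational part is purely elliptic, hence either torsion (impossible) or of infinite order with bounded Cartan projection (contradicting regularity); this leaves only order-two elliptic parts, deferred to the end.

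Having reduced to $A$ unipotent abelian, I would pass to logarithms: $\log A$ is a rank-two lattice in a two-dimensional abelian subalgebra $\mathfrak{a}_0$ of the nilpotent cone of $\mathfrak{sl}_3$. By Engel's theorem $\mathfrak{a}_0$ lies in a maximal nilpotent subalgebra, and a short computation shows every two-dimensional abelian subalgebra there has the form $\langle v, E_{13}\rangle$ with $v = aE_{12} + bE_{23}$; this is a minimal horospherical subalgebra exactly when $ab = 0$ (so $v^2 = 0$), and otherwise contains a regular nilpotent. The crux is to rule out the latter: writing $\exp(xv + yE_{13}) = I + xv + (y + \tfrac{ab}{2}x^2)E_{13}$, I would exhibit a lattice sequence with $x \to \infty$ but $y + \tfrac{ab}{2}x^2$ bounded, along which the matrix is $\approx I + xv$ and the two leading singular values are comparable, $\sigma_1 \asymp \sigma_2 \asymp |x|$, so that $\alpha_1$ remains bounded. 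Hence $\mathfrak{a}_0$ is minimal horospherical, $A$ is discrete of rank two in the two-dimensional group $\exp\mathfrak{a}_0$, and therefore a lattice; finally the residual order-two elliptic parts are eliminated, since such a reflection acts on $\exp\mathfrak{a}_0 \cong \mathbb{R}^2$ with a $(-1)$-eigendirection and so cannot centralize a rank-two lattice. I expect the main obstacle to be the production of these wall-hugging sequences in an arbitrary rank-two lattice---especially the parabola-tracking sequence in the regular-nilpotent case---together with checking that unipotent inflation never drives the Cartan projection back away from the wall.
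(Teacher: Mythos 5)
Your architecture is sound and, despite the Lie-theoretic packaging (Jordan decomposition plus the classification of two-dimensional abelian subalgebras of $\mathfrak{n}$ as $\langle v, E_{13}\rangle$), it runs parallel to the paper's proof: your ``if'' direction is the paper's Lemma \ref{1div}/Claim 3, and your parabola-tracking in the case $ab\neq 0$ is exactly the heart of the paper's Claim 2. The problem is that the two steps you defer \emph{are} the content, and the crux step is stated in a form that would fail. You ask for lattice points of $\log A$ with $|x|\to\infty$ and $y+\tfrac{ab}{2}x^2$ \emph{bounded}. For an arbitrary rank-two lattice this is too strong: with $\log A = \mathbb{Z}(1,0)\oplus\mathbb{Z}(\alpha,\beta)$ in the $(v,E_{13})$-coordinates, the points are $(n+m\alpha, m\beta)$, and taking $n$ to be the integer nearest the real root of $m\beta+\tfrac{ab}{2}(n+m\alpha)^2=0$ leaves an error of size $\asymp\sqrt{|m|}\asymp|x|$; forcing a \emph{bounded} error is a shrinking-target Diophantine condition (that $-m\alpha+\sqrt{-2m\beta/ab}$ lie within $O(1/\sqrt{|m|})$ of $\mathbb{Z}$ infinitely often), which you cannot guarantee for every lattice---and do not need. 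The correct target is the weaker, always-achievable bound $|y+\tfrac{ab}{2}x^2| = O(|x|)$: for $g = I + xv + \epsilon E_{13}$ with $\epsilon = O(|x|)$ one has $\sigma_1(g)\asymp \|g\| \asymp |x|$, while $g^{-1} = I - xv + (abx^2-\epsilon)E_{13}$ gives $\sigma_1(g^{-1}) \asymp x^2$ since $ab\neq 0$, so $\sigma_2(g)=\sigma_1(g^{-1})/\sigma_1(g)\asymp|x|\asymp\sigma_1(g)$ and regularity fails. This is precisely what the paper's Claim 2 does with the choice $n_m=\lfloor -ma_y/a_x+\sqrt{|mZ_{x,y}|}/a_x\rfloor$, obtaining $b(m,n_m)=O(\sqrt{|m|})$, not $O(1)$. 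Note also that your assertion ``$\log A$ is a lattice in $\mathfrak{a}_0$'' is exactly where discreteness enters (it is the paper's degenerate case $Z_{x,y}=0$); it is true, but it deserves a line.

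The reduction to unipotent image has the same character: what you call a routine check (``unipotent inflation never drives the Cartan projection back away from the wall'') is the entire substance of the paper's Claim 1. In the rank-one, wall-valued case of $\bar\phi$ with nontrivial unipotent parts present, one must produce integer pairs $(n_k,m_k)$ along which the diagonal part blows up, $\lambda_x^{n_k}\lambda_y^{m_k}\to\infty$, while the unipotent contribution $n_k\lambda_x^{-1}+\alpha_y\lambda_y^{-1}m_k$ stays bounded (indeed tends to $0$), so that the rescaled matrices converge to a rank-two limit; your ``strip argument'' is this construction, not a verification. Your elliptic step is also stated loosely: the centralizer of a genuine rotation $R_\theta\oplus 1$ is $\mathbb{C}^*$ embedded block-diagonally, which indeed contains no nontrivial unipotents but does contain the hyperbolic elements $\mathrm{diag}(r,r,r^{-2})$, so ``nontrivial rotational part implies purely elliptic'' is only valid after the hyperbolic exclusion has been completed---the order of quantifiers matters, since the two exclusions feed into each other. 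With these repairs (replace ``bounded'' by $O(|x|)$, and actually carry out the wall-hugging constructions), your outline becomes a correct proof, essentially a reorganization of the paper's argument; the paper avoids Jordan decomposition altogether via an eigenvalue case analysis and the substitution trick $\rho'(x)=\rho(xy)$, $\rho'(y)=\rho(y)$ that reduces the mixed case back to its Claim 2.
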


It follows from Theorem \ref{horo} and results\footnote{In greater detail, suppose that $\Gamma < \mathsf{SL}_3(\mathbb{R})$ is discrete and Zariski-dense, and that some minimal horospherical subgroup $U$ of $\mathsf{SL}_3(\mathbb{R})$ is $\Gamma$-compact, where, following Oh \cite{oh99}, we say that a closed subgroup $H$ of $\mathsf{SL}_3(\mathbb{R})$ is {\em $\Gamma$-compact} if $H/(H \cap \Gamma)$ is compact. Then Oh exhibits in \cite[Prop.~4.1]{oh99} a ($\Gamma$-compact) maximal horospherical subgroup $V$ of $\mathsf{SL}_3(\mathbb{R})$ containing $U$ such that the other minimal horospherical subgroup $U'$ of $\mathsf{SL}_3(\mathbb{R})$ contained in $V$ is also $\Gamma$-compact. By Zariski-density of $\Gamma$, there is some $\gamma \in \Gamma$ such that the $\Gamma$-compact minimal horospherical subgroups $U$ and $\gamma U' \gamma^{-1}$ are opposite to one another \cite[Prop.~3.6]{oh99}. One now applies the main theorem of \cite{oh96} to conclude that $\Gamma$ is commensurable to a conjugate of $\mathsf{SL}_3(\mathbb{Z})$. See also Benoist's survey \cite[Prop.~4.1]{benoist2021survey}.} of Oh \cite{oh96, oh99}  that any Zariski-dense discrete subgroup of $\mathsf{SL}_3(\mathbb{R})$ containing a regular~$\mathbb{Z}^2$ is in fact commensurable to an $\mathsf{SL}_3(\mathbb{R})$-conjugate  of $\mathsf{SL}_3(\mathbb{Z})$. Theorem \ref{noregular} now follows since any discrete $\mathbb{Z}^2\ast\mathbb{Z}$ in $\mathsf{SL}_3(\mathbb{R})$ is necessarily Zariski-dense, while $\mathbb{Z}^2\ast\mathbb{Z}$ cannot be realized as a lattice in $\mathsf{SL}_3(\mathbb{R})$, for instance, because groups of the latter form enjoy Kazhdan's property (T)~\cite{Kazhdan} (see also Furstenberg~\cite{Fu67}). Moreover, as regularity is inherited by subgroups, and since $\mathsf{SL}_3(\mathbb{Z})$ contains a lattice in a Cartan subgroup of $\mathsf{SL}_3(\mathbb{R})$, we deduce the following.

\begin{corollary}\label{noz2}
A Zariski-dense regular subgroup of $\mathsf{SL}_3(\mathbb{R})$ contains no $\mathbb{Z}^2$ subgroups.
\end{corollary}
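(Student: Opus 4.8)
The plan is to argue by contradiction, feeding the two inputs already in hand into one another: Theorem \ref{horo}, which pins down regular $\mathbb{Z}^2$ subgroups as lattices in minimal horospherical subgroups, and the consequence of Oh's work recorded above, that a Zariski-dense discrete subgroup of $\mathsf{SL}_3(\mathbb{R})$ containing a regular $\mathbb{Z}^2$ must be commensurable to an $\mathsf{SL}_3(\mathbb{R})$-conjugate of $\mathsf{SL}_3(\mathbb{Z})$. So I would let $\Gamma < \mathsf{SL}_3(\mathbb{R})$ be Zariski-dense and regular, and suppose for contradiction that $\Gamma$ contains a subgroup $A$ isomorphic to $\mathbb{Z}^2$. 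The strategy is to show that such a $\Gamma$ would then be forced to contain a \emph{second} copy of $\mathbb{Z}^2$, this one a lattice in a Cartan subgroup, which Theorem \ref{horo} forbids from being regular.

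First I would note that $\Gamma$ is discrete, regularity being a form of discreteness, and that $A$ is itself regular, since regularity is inherited by subgroups. Thus $\Gamma$ is a Zariski-dense discrete subgroup containing a regular $\mathbb{Z}^2$, and the results of Oh apply to give $g \in \mathsf{SL}_3(\mathbb{R})$ with $\Gamma$ commensurable to $g\,\mathsf{SL}_3(\mathbb{Z})\,g^{-1}$. Next I would invoke the fact that $\mathsf{SL}_3(\mathbb{Z})$ contains a lattice $L_0$ in a Cartan subgroup $C_0$ of $\mathsf{SL}_3(\mathbb{R})$ (for instance, one arising from the unit group of the ring of integers of a totally real cubic field). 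Then $gL_0g^{-1}$ is a lattice in the Cartan subgroup $gC_0g^{-1}$ and lies in $g\,\mathsf{SL}_3(\mathbb{Z})\,g^{-1}$.

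Since $\Gamma$ is commensurable to the latter, the intersection $\Gamma \cap gL_0g^{-1}$ has finite index in $gL_0g^{-1}$, hence is again a cocompact lattice in $gC_0g^{-1} \cong \mathbb{R}^2$, and so is a copy of $\mathbb{Z}^2$ contained in $\Gamma$. This $\mathbb{Z}^2$ is a subgroup of the regular group $\Gamma$ and is therefore regular; but by Theorem \ref{horo} any regular $\mathbb{Z}^2$ is a lattice in a minimal horospherical (hence unipotent) subgroup, whereas our $\mathbb{Z}^2$ is a lattice in a Cartan subgroup, which is reductive and contains $\mathbb{R}$-diagonalizable elements with distinct positive eigenvalues. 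These descriptions are incompatible, and the contradiction completes the argument.

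The one point that I expect to require genuine care — rather than the soft invocation of the cited results — is the bookkeeping around commensurability: I must verify that intersecting the Cartan lattice $gL_0g^{-1}$ with $\Gamma$ preserves the property of being a lattice in $gC_0g^{-1}$. Here I would use explicitly that a finite-index subgroup of a cocompact lattice in $\mathbb{R}^2$ remains such a lattice, and that conjugation carries Cartan subgroups to Cartan subgroups, so that Theorem \ref{horo} really does rule out regularity of the resulting $\mathbb{Z}^2$ and the contradiction is not merely formal.
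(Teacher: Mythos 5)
Your proposal is correct and follows essentially the same route as the paper: the paper's (very terse) derivation of Corollary \ref{noz2} is precisely that Oh's results plus Theorem \ref{horo} make $\Gamma$ commensurable to a conjugate of $\mathsf{SL}_3(\mathbb{Z})$, which contains a lattice in a Cartan subgroup, and such a lattice would inherit regularity from $\Gamma$ while Theorem \ref{horo} forbids a regular $\mathbb{Z}^2$ from being anything but a lattice in a (unipotent) minimal horospherical subgroup. Your commensurability bookkeeping and the semisimple-versus-unipotent incompatibility are exactly the details the paper leaves implicit.
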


We remark that if $F$ is a lattice in a minimal horospherical subgroup of $\mathsf{SL}_3(\mathbb{R})$, then the limit set of $F$ in the Furstenberg boundary of $\mathsf{SL}_3(\mathbb{R})$ is the set of all projective flags of the form $(z, \ell)$, where either the point $z \in \mathbb{P}(\mathbb{R}^3)$ is fixed and $\ell \subset \mathbb{P}(\mathbb{R}^3)$ varies among all projective lines in $ \mathbb{P}(\mathbb{R}^3)$ containing $z$, or the projective line $\ell$ is fixed and $z$ varies among all points of $\ell$; for the precise notion of limit set used here, see Section \ref{prelim}.
Thus, another consequence of Theorem \ref{horo} is that a relative Anosov subgroup $\Gamma$ of $\mathsf{SL}_3(\mathbb{R})$ contains no $\mathbb{Z}^2$ subgroups, since the limit set of such $\Gamma$ in the Furstenberg boundary of $\mathsf{SL}_3(\mathbb{R})$ is antipodal in the language of Kapovich--Leeb--Porti; see \cite{KL, Zhu}. 

It is known that a group admitting a relative Anosov representation is relatively hyperbolic with respect to a family of virtually nilpotent subgroups; see \cite{KL,Zhu}. Since polycyclic groups that lack~$\mathbb{Z}^2$ subgroups are virtually cyclic, and since groups that are hyperbolic relative to virtually cyclic (more generally, hyperbolic) subgroups are themselves hyperbolic, we conclude the following from the previous paragraph.

\begin{corollary}\label{rel-hyp}
Relative Anosov subgroups of $\mathsf{SL}_3(\mathbb{R})$ are Gromov-hyperbolic.
\end{corollary}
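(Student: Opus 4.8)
The plan is to assemble the corollary from three ingredients: the relative hyperbolicity of groups admitting relative Anosov representations, the absence of $\mathbb{Z}^2$ subgroups furnished by Theorem \ref{horo}, and a standard stability property of relatively hyperbolic groups. Nothing beyond Theorem \ref{horo} is genuinely hard here; the substantive content lives in that theorem, which we are granted, and the remaining work is purely a matter of correctly importing and combining known facts.

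First I would recall from \cite{KL} and \cite{Zhu} that a relative Anosov subgroup $\Gamma < \mathsf{SL}_3(\mathbb{R})$ is hyperbolic relative to a collection $\{P_1,\dots,P_k\}$, finite up to conjugacy, of finitely generated virtually nilpotent peripheral subgroups, and that its limit set in the Furstenberg boundary is antipodal. Next, since regularity is inherited by subgroups, any $\mathbb{Z}^2 < \Gamma$ would be regular, whence by Theorem \ref{horo} its image is a lattice in a minimal horospherical subgroup, with limit set exactly one of the sets of flags described above (all flags through a fixed point, or all flags in a fixed line). Such a set is not antipodal; but it is contained in the limit set of $\Gamma$, and any subset of an antipodal set is antipodal, a contradiction. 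Hence $\Gamma$, and therefore each $P_i$, contains no subgroup isomorphic to $\mathbb{Z}^2$.

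I would then invoke the elementary group-theoretic fact that a finitely generated virtually nilpotent—hence virtually polycyclic—group without a $\mathbb{Z}^2$ subgroup is virtually cyclic. Concretely, after passing to a torsion-free nilpotent finite-index subgroup $N$, if the Hirsch length of $N$ were at least $2$ one could produce a copy of $\mathbb{Z}^2$ by choosing a central element $z \in N$ of infinite order together with an element $g$ whose image in $N/\langle z \rangle$ has infinite order, so that $\langle z, g\rangle \cong \mathbb{Z}^2$; thus $N$, and so each $P_i$, has Hirsch length at most $1$ and is virtually cyclic. In particular each peripheral $P_i$ is Gromov-hyperbolic.

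Finally, I would appeal to the stability of Gromov-hyperbolicity under relative hyperbolicity: a group that is hyperbolic relative to a finite family of Gromov-hyperbolic subgroups is itself Gromov-hyperbolic. Applying this to $\Gamma$ with its virtually cyclic peripherals yields the corollary. The only step meriting any care—the ``main obstacle,'' such as it is—is to confirm that the peripheral structure supplied by \cite{KL, Zhu} consists of finitely many conjugacy classes of finitely generated subgroups, so that the combination statement applies verbatim; once this is in place the conclusion is immediate.
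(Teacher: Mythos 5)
Your proposal is correct and follows essentially the same route as the paper: it rules out $\mathbb{Z}^2$ subgroups by combining the antipodality of the limit set from \cite{KL, Zhu} with the description of limit sets of horospherical lattices provided by Theorem \ref{horo}, then concludes that the virtually nilpotent peripheral subgroups are virtually cyclic, and finally invokes the fact that a group hyperbolic relative to hyperbolic subgroups is itself hyperbolic. The only difference is that you spell out the Hirsch-length argument that the paper leaves as the one-line assertion that polycyclic groups without $\mathbb{Z}^2$ subgroups are virtually cyclic.
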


In fact, in forthcoming work \cite{TZ} of the second-named author with F. Zhu, Corollary \ref{rel-hyp} is used to prove the stronger statement that a relative Anosov subgroup of $\mathsf{SL}_3(\mathbb{R})$ is virtually a free group or a surface group.

The relevance of Theorem \ref{horo} to Question \ref{kapovich} is further explained by the following proposition.


\begin{proposition}\label{combination}
Let $\Gamma$ be a lattice in a real linear algebraic semisimple Lie group $G$ of noncompact type and $P$ be a proper parabolic subgroup of $G$. Assume that $P$ is conjugate to its opposite. If $\Delta < \Gamma$ is $P$-regular in $G$ and there is a point in $G/P$ that is opposite to each point in the limit set of $\Delta$ in $G/P$, then for some $\gamma \in \Gamma$, the subgroup $\langle \Delta, \gamma \rangle < \Gamma$ decomposes as~$\Delta * \langle \gamma \rangle$. 
\end{proposition}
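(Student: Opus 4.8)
The plan is to realize the free product via a ping-pong argument on the flag variety $G/P$, using the regularity of $\Delta$ to produce contraction dynamics on which the ping-pong can be run.

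\medskip

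\noindent\textbf{Setting up the dynamics.} First I would invoke the hypothesis that $\Delta$ is $P$-regular. The point of $P$-regularity (divergence in the sense of Guichard--Wienhard) is that the nontrivial elements $\delta \in \Delta$ that are far out in the group act on $G/P$ with strong north--south-type attraction: writing each such $\delta$ in a Cartan/KAK-type decomposition, the gap between the relevant singular values tends to infinity, so $\delta$ maps the complement of a shrinking neighborhood of a repelling locus into a shrinking neighborhood of an attracting flag. Concretely, I would record that for any neighborhood $\mathcal U$ of the limit set $\Lambda := \Lambda(\Delta) \subseteq G/P$ and any compact set $K$ of flags each of which is opposite to every flag in $\Lambda$, all but finitely many $\delta \in \Delta$ satisfy $\delta(K) \subseteq \mathcal U$. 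This is the standard consequence of regularity together with the fact that a flag opposite to the attracting flag of $\delta$ gets pushed toward the attracting flag. The hypothesis that $P$ is conjugate to its opposite $P^-$ is what lets me treat attracting and repelling data symmetrically and identify $G/P$ with $G/P^-$ so that ``opposite'' is a symmetric relation on a single space.

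\medskip

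\noindent\textbf{Producing the free letter.} Let $\xi_0 \in G/P$ be the point opposite to every flag in $\Lambda$, guaranteed by hypothesis. Since $\Gamma$ is a lattice in $G$, it is Zariski-dense, and in particular its action on $G/P$ has no invariant proper subvariety; thus I can choose $\gamma \in \Gamma$ moving $\xi_0$ and its relevant neighborhoods into general position. The goal is to pick $\gamma$ and a neighborhood $\mathcal U$ of $\Lambda$ together with a compact ``opposite'' set $K$ containing $\xi_0$ such that the pair $(\mathcal U, K)$ and the element $\gamma$ satisfy the ping-pong inequalities: $\gamma^{\pm 1}$ maps $\mathcal U$ into the interior of $K \setminus \mathcal U$ (equivalently, into a region consisting of flags opposite to $\Lambda$ and away from $\Lambda$), while every nontrivial $\delta \in \Delta$ maps $K \setminus \mathcal U$ into $\mathcal U$. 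The first condition is arranged by genericity of $\gamma$ (using that $\Gamma$-orbits are Zariski-dense and that the opposition relation is Zariski-open), after possibly replacing $\gamma$ by a high power; the second condition is exactly the contraction from the previous paragraph, valid for all $\delta \in \Delta \setminus \{1\}$ once $\mathcal U$ is shrunk appropriately. I would need to handle the finitely many ``bad'' $\delta$ not yet satisfying the contraction by further shrinking $\mathcal U$ and enlarging the power of $\gamma$, using that $\Delta \cong \mathbb Z^k$ has only finitely many elements of bounded word length inside any ball.

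\medskip

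\noindent\textbf{Ping-pong and the main obstacle.} With sets $A := \mathcal U$ (a neighborhood of $\Lambda$) and $B := \bigcup_{n \neq 0}\gamma^n(\mathcal U)$ (the $\langle\gamma\rangle$-translates), disjoint by construction, the standard ping-pong lemma yields that $\langle \Delta, \gamma\rangle \cong \Delta * \langle \gamma\rangle$: any nontrivial reduced word alternating between $\Delta\setminus\{1\}$ and $\langle\gamma\rangle\setminus\{1\}$ sends $B$ into $A$ or $A$ into $B$ and hence is nontrivial. I expect the main obstacle to be the uniformity required in the two ping-pong inequalities simultaneously: the contraction strength furnished by regularity is a statement about the limit set $\Lambda$ as a whole, but for ping-pong I need a single open set $\mathcal U$ that is contracted by \emph{every} nontrivial $\delta \in \Delta$, not merely cofinitely many, and \emph{simultaneously} pushed off itself by $\gamma^{\pm 1}$. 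Reconciling these is where the opposition hypothesis and the choice of $\gamma$ via Zariski-density must be combined carefully: I must ensure that $\gamma(\mathcal U)$ lands in the region of flags opposite to $\Lambda$ (so that regularity applies to it) while staying outside $\mathcal U$, which forces a compatible choice of the neighborhood basis and may require passing to a power $\gamma^N$ to separate $\gamma^N(\mathcal U)$ from $\Lambda$. Verifying that such a simultaneous choice exists — essentially an open-condition argument on $G/P$ — is the technical heart of the proof.
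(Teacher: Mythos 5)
Your overall strategy --- cofinite contraction of $\Delta$ on an antipodal pair of compacta, plus a ping-pong partner $\gamma$ supplied by the lattice --- is the same as the paper's, and you correctly isolate the crux: ping-pong requires a condition on \emph{every} nontrivial $\delta \in \Delta$, while regularity only gives it for all but finitely many. But your proposed fix for the exceptional elements does not work, and in fact goes in the wrong direction. If $\mathcal{U}' \subset \mathcal{U}$, then $\delta K \subset \mathcal{U}'$ implies $\delta K \subset \mathcal{U}$, so shrinking $\mathcal{U}$ only \emph{enlarges} the set of bad elements; indeed, for any fixed nontrivial $\delta$, the compact set $\delta K$ has nonempty interior, so once $\mathcal{U}$ is small enough one has $\delta K \not\subset \mathcal{U}$ --- every fixed element of $\Delta$ eventually becomes bad. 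Raising $\gamma$ to a high power is irrelevant to this issue, since the failing condition concerns elements of $\Delta$, not of $\langle \gamma \rangle$, and the finiteness of balls in $\mathbb{Z}^k$ merely restates that the exceptional set is finite; it does not remove it.

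The paper resolves this differently, and this is the one idea your proposal is missing: rather than forcing the bad elements $\delta_1, \ldots, \delta_k$ to contract into a neighborhood of $\Lambda$, it \emph{enlarges the $\Delta$-side ping-pong set to absorb their images}. Concretely, for each $i$ the locus $Z_i$ of flags $z$ not opposite to $\delta_i z$ is a proper algebraic subset of $G/P$, so one can choose a compact $V \subset U \smallsetminus \bigcup_{i=1}^k Z_i$ with nonempty interior such that $V$ and $\delta_i V$ are antipodal for each $i$, and then set $W = W_0 \cup \bigcup_{i=1}^k \delta_i V$. Now $\delta V \subset W$ for \emph{all} nontrivial $\delta \in \Delta$ (the generic ones by contraction into $W_0$, the exceptional ones by construction), and $V$ and $W$ are still antipodal; choosing a $P$-proximal $g \in \Gamma$ with limit set in $\mathring{V}$ (via Selberg) and a power $\gamma = g^N$ with $\gamma^n W \subset V$ for all $n \neq 0$, the two-set ping-pong lemma applies. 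Note that ping-pong never requires the $\Delta$-side set to be a small neighborhood of $\Lambda$, only to be disjoint from (here, antipodal to) the $\gamma$-side set --- that is exactly the flexibility your formulation (``a single open set $\mathcal{U}$ contracted by every nontrivial $\delta$'') forecloses, and without it the argument as you set it up cannot be completed.
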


Thus, if there had been a regular $\mathbb{Z}^2$ in $\mathsf{SL}_3(\mathbb{Z})$ with ``small'' limit set in $\mathbb{P}(\mathbb{R}^3)$---a scenario that is ruled out by Theorem \ref{horo}---then Proposition \ref{combination} would have furnished a $\mathbb{Z}^2*\mathbb{Z}$ subgroup of~$\mathsf{SL}_3(\mathbb{Z})$, and even a regular such subgroup by work of Dey and Kapovich \cite[Thm.~3.2]{DK22}.

In light of Corollary \ref{noz2}, a result  \cite[Thm.~1.1]{ct20} of the second-named author with R. Canary asserting that Anosov subgroups of $\mathsf{SL}_3(\mathbb{R})$ are virtually isomorphic to Fuchsian groups, and aforementioned forthcoming work of the second-named author with Zhu, the following question seems natural.

\begin{question}
Is every regular Zariski-dense subgroup of $\mathsf{SL}_3(\mathbb{R})$ virtually isomorphic to a Fuchsian group?
\end{question}

\noindent {\bf Acknowledgements.} We thank Nic Brody, Alan Reid, Gabriele Viaggi, and Feng Zhu for interesting discussions. The first-named author was supported by the Huawei Young Talents Program. The second-named author was supported by the European Research Council (ERC) under the European's Union Horizon 2020 research and innovation programme (ERC starting grant DiGGeS, grant agreement No 715982).

\section{Preliminaries}\label{prelim}

For two sequences $(a_k)_{k\in \mathbb{N}}$ and $(b_k)_{k\in \mathbb{N}}$ of positive real numbers, we write $a_k \asymp b_k$ (resp., $a_k=O(b_k)$) if there is a constant $C>1$ such that $C^{-1}a_k \leq b_k\leq Ca_k$ (resp., $a_k\leq Cb_k$) for every $k$. 

Throughout this section, let $G$ be a finite-center real semisimple Lie group with finitely many connected components and maximal compact subgroup $K<G$, and let $X = G/K$ be the associated symmetric space. Let $P$ be a proper parabolic subgroup of $G$, so that $P$ is the stabilizer in $G$ of a point $z \in \partial_\infty X$, where $\partial_\infty X$ denotes the visual boundary of $X$. Pick a point $o \in X$, and let $\xi$ be the geodesic ray in $X$ emanating from $o$ in the class of $z$. Fix also a Weyl chamber $\overline{\mathfrak{a}}^+ \subset X$ for $G$ in $X$ with origin $o$ containing the ray $\xi$. A sequence $(g_n)_{n \in \mathbb{N}}$ in $G$ is {\em $P$-regular} if the vector-valued distances $d_{\overline{\mathfrak{a}}^+}(o, g_n o)$ diverge from each wall of $\overline{\mathfrak{a}}^+$ not containing $\xi$. This notion is independent of all the choices made after specifying the parabolic subgroup $P$. If $\Gamma$ is a discrete group, a representation $\rho:\Gamma \rightarrow G$ is called {\em $P$-regular} if for every sequence $(\gamma_n)_{n\in \mathbb{N}}$ in $\Gamma$ with $\gamma_n\rightarrow \infty$, the sequence $(\rho(\gamma_n))_{n \in \mathbb{N}}$ is $P$-regular. We remark that such a representation has finite kernel and discrete image, and is moreover $P^{\mathrm{opp}}$-regular, where $P^{\mathrm{opp}}$ denotes a parabolic subgroup opposite to $P$. A subgroup $\Delta$ of $G$ is called {\em P-regular} if the inclusion $\Delta \xhookrightarrow{} G$ is $P$-regular. Notice that if a subgroup $\Gamma$ of $G$ is $P$-regular then so are all subgroups of $\Gamma$. For more background, see Kapovich--Leeb--Porti \cite{KLP17}, as well as earlier work of Guichard--Wienhard \cite{GW12} where the notion of $P$-regularity appears instead as {\em $P$-divergence}.


\begin{Example} \normalfont{(The case  $G=\mathsf{SL}_3(\mathbb{K})$). Let $\mathbb{K}=\mathbb{R}$ or $\mathbb{C}$ and set $K_{\mathbb{R}}=\mathsf{SO}(3)$ and $K_{\mathbb{C}}=\mathsf{SU}(3)$. Any element $g\in \mathsf{SL}_3(\mathbb{K})$ can be written in the form $$g=k_g\textup{diag}\big(\sigma_1(g),\sigma_2(g),\sigma_3(g)\big)k_g' \ \ k_g,k_g'\in K_{\mathbb{K}},$$ where $\sigma_1(g)\geq \sigma_2(g)\geq \sigma_3(g)$ are uniquely determined and are called the {\em singular values} of $g$. The {\em Cartan projection}\footnote{This is the vector-valued distance $d_{\overline{\mathfrak{a}}^+}(o, g o)$ with respect to a particular choice of point $o \in X := \mathsf{SL}_3(\mathbb{K})/K_{\mathbb{K}}$ and Weyl chamber for $\mathsf{SL}_3(\mathbb{K})$ in $X$ with origin $o$.} of $g$ is $\mu(g)=(\log \sigma_1(g),\log \sigma_2(g),\log \sigma_3(g))\in \overline{\mathfrak{a}}^{+}$.} 

We will simply say a sequence in (resp., a representation into, subgroup of) $\mathsf{SL}_3(\mathbb{K})$ is {\em regular} if it is $P$-regular with respect to the stabilizer $P<\mathsf{SL}_3(\mathbb{K})$ of a line in $\mathbb{K}^3$. This language is unambiguous for representations into (hence subgroups of) $\mathsf{SL}_3(\mathbb{K})$; indeed, if $P$ and $Q$ are any two proper parabolic subgroups of $\mathsf{SL}_3(\mathbb{K})$, then a representation $\rho: \Gamma \rightarrow \mathsf{SL}_3(\mathbb{K})$ is $P$-regular if and only $\rho$ is $Q$-regular. A sequence $(g_n)_{n\in \mathbb{N}}$ in $\mathsf{SL}_3(\mathbb{K})$ is regular if and only if $$\lim_{n \rightarrow \infty}\frac{\sigma_1(\rho(g_n))}{\sigma_2(\rho(g_n))}=\infty.$$ 
Note that, in this case, the sequence $\left(\frac{1}{\sigma_1(\rho(g_n))} \rho(g_n)\right)_{n \in \mathbb{N}}$ subconverges to a rank-$1$ matrix.
\end{Example}

We will also use the following characterization of $P$-regularity in terms of the dynamics on the flag manifold $G/P$. A sequence $(g_n)_{n\in \mathbb{N}}$ is called {\em$P$-contracting} if there are points $z^+ \in G/P$ and $z^- \in G/P^{\mathrm{opp}}$ such that $g_{n}$ converges uniformly on compact subsets of $C(z^-)$ to the constant function $z^+$, where $C(z^-)$ denotes the set of all points in $G/P$ opposite to $z^-$. In this case, we write $ (g_n)_n^+ := z^+$.

\begin{fact}\label{regularcontracting} \textup{(}\cite[Prop.~4.15]{KLP17}\textup{)}. A sequence in $G$ that is $P$-contracting is also $P$-regular. A sequence in $G$ that is $P$-regular possesses a subsequence that is $P$-contracting.

In particular, a sequence $(g_n)_{n \in \mathbb{N}}$ in $G$ is $P$-regular if and only if every subsequence of $(g_n)_{n \in \mathbb{N}}$ possesses a $P$-contracting subsequence. 
\end{fact}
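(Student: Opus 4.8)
The plan is to derive the final ``in particular'' assertion from the two preceding sentences, which constitute the content cited from \cite[Prop.~4.15]{KLP17}; once those are granted, the stated equivalence is a purely formal subsequence argument. The key preliminary observation is that $P$-regularity is inherited by subsequences: by definition $(g_n)_{n\in\mathbb{N}}$ is $P$-regular exactly when, for every wall of $\overline{\mathfrak{a}}^+$ not containing $\xi$, the distance from $d_{\overline{\mathfrak{a}}^+}(o,g_n o)$ to that wall tends to infinity, and any such divergence persists along every subsequence.

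For the forward direction, suppose $(g_n)_{n\in\mathbb{N}}$ is $P$-regular. Then every subsequence is again $P$-regular by the observation above, and hence, by the second sentence of the Fact, possesses a $P$-contracting sub-subsequence. This is precisely the assertion that every subsequence of $(g_n)_{n\in\mathbb{N}}$ admits a $P$-contracting subsequence.

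For the converse I would argue by contraposition. If $(g_n)_{n\in\mathbb{N}}$ is \emph{not} $P$-regular, then negating the definition yields a wall $W$ of $\overline{\mathfrak{a}}^+$ not containing $\xi$ together with a subsequence $(g_{n_k})_{k\in\mathbb{N}}$ along which the distance from $d_{\overline{\mathfrak{a}}^+}(o,g_{n_k}o)$ to $W$ remains bounded. This subsequence can have no $P$-contracting sub-subsequence: any such sub-subsequence would be $P$-regular by the first sentence of the Fact, forcing its distances to $W$ to diverge and contradicting the chosen bound. Thus some subsequence of $(g_n)_{n\in\mathbb{N}}$ fails to admit a $P$-contracting subsequence, which is exactly the negation of the right-hand condition.

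There is no serious obstacle here; the argument is the standard subsequence criterion (a sequence has the property iff every subsequence has a sub-subsequence with it). The only point requiring care is the correct negation of $P$-regularity, a condition quantified over the finitely many walls of $\overline{\mathfrak{a}}^+$ not containing $\xi$, so that its failure produces a single offending wall $W$ and a single bounded subsequence to feed into the contrapositive.
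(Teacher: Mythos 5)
Your proposal is correct: the paper itself offers no proof of this Fact, citing the first two sentences from \cite[Prop.~4.15]{KLP17} and treating the ``in particular'' equivalence as an immediate formal consequence, which is exactly what you establish via the standard subsequence criterion (together with the observation that $P$-regularity passes to subsequences and the correct negation over the finitely many walls). Your argument is the intended one and fills in precisely the routine step the paper leaves implicit.
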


The  {\em limit set} of a subgroup $\Gamma<G$  in the flag manifold $G/P$, denoted by $\Lambda_\Gamma^P$, is by definition the set of $(\gamma_n)_n^+ \in G/P$ for all $P$-contracting sequences $(\gamma_n)_{n \in \mathbb{N}}$ in $\Gamma$. If $P$ is conjugate to $P^\mathrm{opp}$, two subsets $\Lambda_1, \Lambda_2 \subset G/P$ are {\em antipodal} if each element of $\Lambda_1$ is opposite to each element of $\Lambda_2$.

The proof of the following lemma uses the fact that, for a matrix $g=(g_{ij})_{i,j=1}^3$ in $\mathsf{SL}_3(\mathbb{C})$, one has $\frac{1}{\sqrt{3}}||g||_2\leq \sigma_1(g)\leq ||g||_2$, where $||g||_2:=(\sum_{i,j=1}^{3}|g_{ij}|^2)^{1/2}$ is the $\ell^2$-matrix norm of $g$.

\begin{lemma}\label{1div} Let $(g_n)_{n\in \mathbb{N}}$ be an infinite unbounded sequence of matrices in $\mathsf{SL}_3(\mathbb{C})$ with $$g_n=\begin{pmatrix}
1 & x_{n} & y_{n} \\ 
 0& 1 & z_{n} \\ 
0 &  0& 1
\end{pmatrix}.$$ Then $(g_n)_{n\in \mathbb{N}}$ is regular if and only if $$\lim_{n\rightarrow \infty} \frac{x_n^2+y_n^2+z_n^2}{|x_n|+ |z_n|+|x_nz_n-y_n|}=\infty.$$\end{lemma}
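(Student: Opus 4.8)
The plan is to reduce regularity to the explicit ratio $\sigma_1(g_n)/\sigma_2(g_n)$ and then compute both singular-value quantities in terms of the entries, using the Frobenius-norm comparison recalled just before the lemma, applied both to $g_n$ and to its second exterior power. Recall that $(g_n)_{n\in\mathbb{N}}$ is regular exactly when $\sigma_1(g_n)/\sigma_2(g_n)\to\infty$. Since $\det g_n=1$ forces $\sigma_1(g_n)\sigma_2(g_n)\sigma_3(g_n)=1$, it is convenient to write $\sigma_1/\sigma_2=\sigma_1^2/(\sigma_1\sigma_2)$ and estimate the numerator and the product separately.

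First I would estimate the numerator: by the recalled inequality, $\sigma_1(g_n)\asymp\|g_n\|_2$, so $\sigma_1(g_n)^2\asymp\|g_n\|_2^2=3+|x_n|^2+|y_n|^2+|z_n|^2$. For the product I use that $\sigma_1(g_n)\sigma_2(g_n)=\sigma_1(\wedge^2 g_n)$, where the second exterior power $\wedge^2 g_n$ is the $3\times 3$ matrix whose entries are the $2\times 2$ minors of $g_n$. A direct computation shows that the nonzero minors are $x_n$, $z_n$, $x_nz_n-y_n$, together with three minors equal to $1$ (the other two vanish), so that, applying the recalled inequality to the $3\times 3$ matrix $\wedge^2 g_n$,
$$\sigma_1(g_n)\sigma_2(g_n)=\sigma_1(\wedge^2 g_n)\asymp\|\wedge^2 g_n\|_2=\sqrt{3+|x_n|^2+|z_n|^2+|x_nz_n-y_n|^2}.$$
Passing from the $\ell^2$ to the comparable $\ell^1$ norm on a fixed number of coordinates, this last quantity is $\asymp 1+D_n$, where I abbreviate $D_n:=|x_n|+|z_n|+|x_nz_n-y_n|$ and $N_n:=|x_n|^2+|y_n|^2+|z_n|^2$. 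Combining the two estimates yields $\sigma_1(g_n)/\sigma_2(g_n)\asymp(1+N_n)/(1+D_n)$.

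It then remains to prove the elementary equivalence $\frac{1+N_n}{1+D_n}\to\infty\iff\frac{N_n}{D_n}\to\infty$, which I expect to be the only delicate point, since the additive constants coming from the three unit minors must be shown not to affect the divergence. The forward implication is soft: if $N_n/D_n$ failed to diverge, a subsequence would satisfy $N_n\leq LD_n$ for some finite $L$; divergence of $(1+N_n)/(1+D_n)$ forces $N_n\to\infty$ along this subsequence, hence $D_n\to\infty$ there, whence $\frac{1+N_n}{1+D_n}\leq\frac{1+LD_n}{D_n}\to L$, a contradiction. For the reverse implication the key input is the polynomial bound $N_n\leq C\,D_n^2(1+D_n^2)$, obtained from $|y_n|\leq|x_n||z_n|+|x_nz_n-y_n|$ upon expanding $N_n=|x_n|^2+|z_n|^2+|y_n|^2$. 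This bound shows that if $D_n$ stays bounded then $N_n/D_n$ stays bounded as well; hence $N_n/D_n\to\infty$ forces $D_n\to\infty$, and then $1+D_n\leq 2D_n$ gives $\frac{1+N_n}{1+D_n}\geq\frac{N_n}{2D_n}\to\infty$. Finally, I would note that terms with $g_n$ equal to the identity (where both conditions fail) cause no trouble, and that for real entries $|x_n|^2=x_n^2$, etc., matching the form of the statement. This completes the plan.
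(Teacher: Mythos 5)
Your proposal is correct and takes essentially the same route as the paper: both reduce to $\sigma_1/\sigma_2=\sigma_1^2/(\sigma_1\sigma_2)$ and estimate $\sigma_1\sigma_2$ by the Frobenius norm of an explicit auxiliary matrix whose nontrivial entries are (up to sign) $x_n$, $z_n$, $x_nz_n-y_n$ --- yours is $\wedge^2 g_n$, the paper's is $g_n^{-1}$, and for $\mathsf{SL}_3(\mathbb{C})$ these coincide up to signs and transposition, so the resulting estimates are identical. Your bookkeeping with the additive constants $1+N_n$, $1+D_n$ is in fact slightly more careful than the paper's, and the only blemish (three, not two, of the $2\times 2$ minors vanish) is immaterial.
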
 

\begin{proof} A straightforward calculation shows that for every $n\in \mathbb{N}$ we have that $$g_n^{-1}=\begin{pmatrix}
1 & -x_n & x_nz_n-y_n \\ 
 0& 1 & -z_n \\ 
0 &  0& 1
\end{pmatrix}.$$ Since $g_n \in \mathsf{SL}_3(\mathbb{C})$, we have $\sigma_1(g_n)\sigma_2(g_n)\sigma_3(g_n)=1$ and $\sigma_1(g_n^{-1})=\sigma_3(g_n)^{-1}$ for every $n$, and hence we obtain $$\frac{\sigma_1(g_n)}{\sigma_2(g_n)}=\frac{\sigma_1(g_n)^2\sigma_3(g_n)}{\sigma_1(g_n)\sigma_2(g_n)\sigma_3(g_n)}=\frac{\sigma_1(g_n)^2}{\sigma_1(g_n^{-1})}.$$ Now since \begin{align*} \sigma_1(g_n) \asymp |x_n|+|y_n|+|z_n|, \> \ \sigma_1(g_n^{-1}) \asymp |x_n|+|x_nz_n-y_n|+|z_n|, \end{align*}
the conclusion follows.\end{proof}

\section{Proof of Theorem \ref{horo}}

\begin{proof}[Proof of Theorem \ref{horo}]
Suppose that $\rho:\mathbb{Z}^2\rightarrow \mathsf{SL}_3(\mathbb{R})$ is a regular representation. We first prove that the image of $\rho$ is unipotent. Fix a $\mathbb{Z}$-basis $x,y \in \mathbb{Z}^2$ for $\mathbb{Z}^2$. \medskip

\noindent {\em Claim 1. The image $\rho(\mathbb{Z}^2)$ is a unipotent subgroup of $\mathsf{SL}_3(\mathbb{R})$}.

\begin{proof}[Proof of Claim 1]
Suppose otherwise. Assume first that all the eigenvalues of $\rho(x)$ are distinct. Then, up to conjugation within $\mathsf{SL}_3(\mathbb{C})$, the image of $\rho$ is a diagonal subgroup of $\mathsf{SL}_3(\mathbb{C})$. Since $\rho$ is discrete, we have that $\mu(\rho(\mathbb{Z}^2))$ contains the intersection of $\overline{\mathfrak{a}}^+$ with a lattice in $\mathfrak{a}$. It follows that~$\rho$ is not regular in this case.   

In the remaining case, up to conjugating $\rho$ within $\mathsf{SL}_3(\mathbb{R})$, we have
$$\rho(x)=\begin{pmatrix}
\lambda_x & 1 & 0 \\ 
 0& \lambda_x & 0 \\ 
0 &  0& \lambda_x^{-2} 
\end{pmatrix},\ \ \rho(y)=\begin{pmatrix}
\lambda_y & \alpha_{y} & 0 \\ 
 0& \lambda_y & 0 \\ 
0 &  0& \lambda_y^{-2}
\end{pmatrix},$$
for some $\lambda_x, \lambda_y, \alpha_y \in \mathbb{R}$. Then we have
$$
\rho(x^n y^m) = \lambda_x^n \lambda_y^m \begin{pmatrix}
1 & \lambda_x^{-1} n + \alpha_y \lambda_y^{-1} m & 0 \\ 
 0& 1 & 0 \\ 
0 &  0& \lambda_x^{-3n} \lambda_y^{-3m}
\end{pmatrix}$$
for $n, m \in \mathbb{Z}$. Now there is an infinite sequence of distinct pairs of integers $(n_k,m_k)_{k\in \mathbb{N}}$ such that $\lim_k (\lambda_x^{-1}n_k + \alpha_y \lambda_y^{-1}m_k) = 0$ and $\lim_k( \lambda_x^{n_k} \lambda_y^{m_k} ) = \infty$; note we can indeed ensure the latter, since otherwise discreteness of $\rho$ would be violated. Observe that $\sigma_1(\rho(x^{n_k}y^{m_k}))\asymp \lambda_{x}^{n_k}\lambda_y^{m_k}$ as $k\rightarrow \infty$ and that the sequence of matrices $\big(\frac{1}{\lambda_x^{n_k}\lambda_y^{m_k}}\rho(x^{n_k}y^{m_k})\big)_{k\in \mathbb{N}}$ converges to a matrix of rank $2$. In particular, the sequence $(\rho(x^{n_k}y^{m_k}))_{k\in \mathbb{N}}$ cannot be regular, so that $\rho$ is not regular.
\end{proof}

Therefore, the image of the representation $\rho:\mathbb{Z}^2 \rightarrow \mathsf{SL}_3(\mathbb{R})$ has to be unipotent. We show that $\rho(\mathbb{Z}^2)$ lies in a minimal horospherical subgroup of $\mathsf{SL}_3(\mathbb{R})$. Up to conjugation, we may assume that \begin{equation}\label{unip} \rho(x)=\begin{pmatrix}
1 & a_{x} & b_{x} \\ 
 0& 1 & c_{x} \\ 
0 &  0&1 
\end{pmatrix}, \ \rho(y)=\begin{pmatrix}
1 & a_{y} & b_{y} \\ 
 0& 1 & c_{y} \\ 
0 &  0&1 
\end{pmatrix},\end{equation} where $a_x,b_x,a_y,b_y\in \mathbb{R}$. Since $\rho(x)$ commutes with $\rho(y)$, we have that $a_{x}c_{y}=a_{y}c_{x}$. 
\medskip

\noindent {\em Claim 2. The identity $a_{y}c_{x}=a_xc_y=0$ holds}.

\begin{proof}[Proof of Claim 2] We prove the claim by contradiction. Assuming $a_yc_x\neq 0$, we will exhibit infinite sequences $(w_{m})_{m\in \mathbb{Z}}$ in $\mathbb{Z}^2$ such that $\big(\frac{\sigma_1}{\sigma_2}(\rho(w_m))\big)_{m\in \mathbb{Z}}$ has an infinite bounded subsequence.

Set $\lambda:=\frac{c_x}{a_x}=\frac{c_y}{a_y}\neq 0$. By conjugating the image of $\rho$ with the diagonal matrix $\textup{diag}(1,1,\lambda)\in \mathsf{GL}_3(\mathbb{R})$, we may assume that $a_x=c_x$ and $a_y=c_y$, and hence $$\rho(x)=\begin{pmatrix}
1 & a_{x} & b_{x} \\ 
 0& 1 & a_{x} \\ 
0 &  0&1 
\end{pmatrix},\  \rho(y)=\begin{pmatrix}
1 & a_{y} & b_{y} \\ 
 0& 1 & a_{y} \\ 
0 &  0&1 
\end{pmatrix}.$$ 

A straightforward calculation shows that, for $m,n \in \mathbb{Z}$,
$$\rho(x^{n})=\begin{pmatrix}
1 & n a_x & n\big(b_{x}-\frac{a_x^2}{2}\big)+\frac{n^2 a_x^2 }{2} \\ 
 0& 1 &  n a_x \\ 
0 &  0&1 
\end{pmatrix}, \  \rho(y^{m})=\begin{pmatrix}
1 & m a_y & n\big(b_{y}-\frac{a_y^2}{2}\big)+\frac{m^2 a_y^2 }{2} \\ 
 0& 1 &  m a_y \\ 
0 &  0&1 
\end{pmatrix},$$ $$\rho(x^{n}y^{m})=\begin{pmatrix}
1 & a(m,n) & b(m,n) \\ 
 0& 1 &  a(m,n) \\ 
0 &  0&1 
\end{pmatrix},$$
where \begin{align*} a(m,n)&:= n a_x+m a_y,\\ b(m,n)&:= mn a_xa_y+\frac{n^2 a_x^2}{2}+\frac{m^2 a_y^2}{2}+n\Big(b_{x}-\frac{a_x^2}{2}\Big)+m\Big(b_{y}-\frac{a_y^2}{2}\Big) \\ &= \frac{1}{2}\big(n a_x+ma_y\big)^2+n\Big(b_{x}-\frac{a_x^2}{2}\Big)+m\Big(b_{y}-\frac{a_y^2}{2}\Big) \\ &=\frac{1}{2}a(m,n)^2+\frac{B_{x}}{a_{x}}a(m,n)+m\Big(B_{y}-\frac{ a_y}{a_x}B_{x}\Big)\\ &=\frac{1}{2} \Big( \Big(a(m,n)+\frac{B_{x}}{a_x}\Big)^2 - \frac{2B_x^2}{a_x^2}+mZ_{x,y}\Big), \end{align*} where the constants $B_{x},B_{y},Z_{x,y}\in \mathbb{R}$ are defined as follows: \begin{align*} B_{x}&:=b_{x}-\frac{a_x^2}{2},\ B_{y}:=b_{y}-\frac{ a_y^2}{2}\\ Z_{x,y}&:=2\Big(B_{y}-\frac{ a_y}{a_x}B_{x}\Big).\end{align*}

Suppose first that $Z_{x,y}=0$, and choose infinite sequences $(k_m)_{m\in \mathbb{N}}, (r_m)_{m\in \mathbb{N}}$ of integers such that $$\big|a(k_m,r_m)\big| = \big|k_m a_x+r_m a_y\big| \leq 1$$ for every $m$. By our assumption that $Z_{x,y}=0$, we have that $(b(k_m,r_m))_{m\in \mathbb{Z}}$ is also bounded, and hence so is $(\rho(x^{k_m}y^{r_m}))_{m\in \mathbb{N}}$, violating our assumption that $\rho$ is discrete and faithful.

Now suppose that $Z_{x,y}\neq0$. Let $m\in \mathbb{Z}$ with $mZ_{x,y}<0$, and define $$n_{m}:=\left \lfloor -m\frac{a_y}{a_x}+\frac{1}{a_x}\sqrt{|mZ_{x,y}|} \right \rfloor$$
so that \begin{equation}\label{amnm}\Big|a(m,n_m)-\sqrt{|mZ_{x,y}|}\Big|=|a_x| \Big| n_m+\frac{a_y}{a_x}m-\frac{1}{a_x}\sqrt{|mZ_{x,y}|}\Big| \leq |a_x|.\end{equation}

Note that $|a(m,n_m)|\asymp \sqrt{|m|}$, and hence \begin{align*}
\big|b(m,n_m)\big|& \leq \frac{B_x^2}{a_x^2}+\frac{1}{2}\left|a(m,n_m)+\frac{B_{x}}{a_x}-\sqrt{|mZ_{x,y}|}\right|\cdot  \left|a(m,n_m)+\frac{B_{x}}{a_x}+\sqrt{|m Z_{x,y}|}\right|\\ & \leq  \frac{B_x^2}{a_x^2}+\left(|a_x|+\frac{|B_x|}{\big|a_x\big|}\right)\left( \big|a(m,n_m)\big|+\frac{\big|B_{x}\big|}{\big| a_x\big|}+\sqrt{|mZ_{x,y}|}\right)\\ &=O(\sqrt{|m|}), \ \  mZ_{x,y}\rightarrow -\infty,\end{align*}
where the second inequality follows from (\ref{amnm}).

Finally, we claim that the sequence $\rho(w_{m})_{m\in \mathbb{Z}}$, where $w_m:=x^{n_m}y^{m}$, has an infinite subsequence that is not regular. Indeed, for $m\in \mathbb{Z}$ with $mZ_{x,y}<0$, we have by Lemma \ref{1div} that $$\frac{\sigma_1(\rho(w_m))}{\sigma_2(\rho(w_m))}\asymp \frac{2a(m,n_m)^2+b(m,n_m)^2}{2\big|a(m,n_m)\big|+\big| a(m,n_m)^2 -b(m,n_m)\big|}$$ and the latter fraction remains bounded since $\big|a(m,n_m)\big|\asymp \sqrt{|m|}$ and $\big|b(m,n_m)\big|=O(\sqrt{|m|})$ as $mZ_{x,y}\rightarrow -\infty$.

We thus arrive at a contradiction, and so we conclude that $a_xc_y=a_yc_x=0$. \end{proof}

\noindent {\em Completing the proof of Theorem \ref{horo}.} We have reduced to the case that $\rho$ is as in (\ref{unip}) with $a_xc_y=a_yc_x=0$. 

Suppose first that $a_x=c_x=0$ and $a_yc_y\neq 0$. In this case, we may define a new representation $\rho':\mathbb{Z}^2\rightarrow \mathsf{SL}_3(\mathbb{R})$ given by $$\rho'(x)=\rho(xy), \  \rho'(y)=\rho(y).$$ Since $\rho$ is assumed to be regular, the same holds for $\rho'$. Now note that the $(1,2)$ and $(2,3)$ entries of $\rho'(x)$ and $\rho'(y)$ are non-zero, so that the representation $\rho'$ cannot be regular by Claim 1, a contradiction. By applying the same argument with $x$ and $y$ interchanged, we conclude that in fact $a_x=a_y=0$ or $c_x=c_y=0$ as desired.

Finally, we verify that if $\rho(\mathbb{Z}^2)$ is a lattice in a minimal horospherical subgroup of $\mathsf{SL}_3(\mathbb{R})$, then~$\rho$ is indeed regular. This follows immediately from Lemma \ref{1div}, but we present the following geometric argument that applies in any dimension. We first consider the case that $\rho(\mathbb{Z}^2)$ is a lattice in the unipotent radical of the stabilizer in $\mathsf{SL}_3(\mathbb{R})$ of a hyperplane in $\mathbb{R}^3$.
\medskip

\noindent {\em Claim 3. Let $U$ be the unipotent radical of the stabilizer in $\mathsf{SL}_d(\mathbb{R})$ of a hyperplane $V \subset \mathbb{R}^d$. Then any lattice $F$ in $U$ is $P$-regular, where $P$ is the stabilizer of a line in $\mathbb{R}^d$}.

\begin{proof}[Proof of Claim 3]
We identify the $U$-invariant affine chart $\mathbb{P}(\mathbb{R}^d)\smallsetminus \mathbb{P}(V)$ with $\mathbb{R}^{d-1}$, so that $U$ acts on $\mathbb{R}^{d-1}$ via translations. For a point $z \in \mathbb{R}^{d-1}$ and $R>0$, denote by $B(z, R)$ the Euclidean ball in~$\mathbb{R}^{d-1}$ of radius $R$ centered at $z$. Fix a point $z_0 \in \mathbb{R}^{d-1}$.

Now let $(\gamma_n)_{n \in \mathbb{N}}$ be a sequence in $F$ with $\gamma_n \rightarrow \infty$. Then, since $\mathbb{P}(\mathbb{R}^d)$ is compact, up to extraction, we have that $\gamma_n z_0 \rightarrow z^+$ for some $z^+ \in \mathbb{P}(\mathbb{R}^d)$. Moreover, since $F$ acts properly on $\mathbb{R}^{d-1}$, we in fact have $z^+ \in \mathbb{P}(V)$. 

We claim that $(\gamma_n)_{n \in \mathbb{N}}$ converges uniformly on compact subsets of $\mathbb{R}^d$ to the constant function~$z^+$. Indeed, let $W_n$ be a metric $\frac{1}{n}$-neighborhood of $z^+$ in $\mathbb{P}(\mathbb{R}^d)$ with respect to the Fubini--Study metric on the latter; viewed in our chosen affine chart, the boundary of $W_n$ is a two-sheeted hyperboloid for $n$ sufficiently large. It suffices to show that for any $n \in \mathbb{N}$, there is some $N \in \mathbb{N}$ such that $W_n \supset \gamma_N B(z_0,n) = B(\gamma_N z_0, n)$. But this is true since, given any $n \in \mathbb{N}$, there is some $m \in \mathbb{N}$ such that $B(z,n) \subset W_n$ for each $z \in W_m$.\end{proof}


In the remaining case, where $\rho(\mathbb{Z}^2)$ lies in the unipotent radical of the stabilizer of a {\em line} in $\mathbb{R}^3$, one argues as above with the dual representation $\rho^{\ast}$ instead of $\rho$, as $\sigma_{i}(\rho^{\ast}(\gamma))=\sigma_{4-i}(\rho(\gamma))^{-1}$ for $\gamma \in \mathbb{Z}^2$ and $1\leq i\leq 3$.
\end{proof}


\begin{rmk} \normalfont{Following the above approach, it is not difficult to see that if  $\langle a,b\rangle<\mathsf{SL}_3(\mathbb{R})$ is a discrete $\mathbb{Z}^2$ which is not contained in a minimal horospherical subgroup, then the limit set of $\langle a,b \rangle$ in $\mathbb{P}(\mathbb{R}^3)$ consists of at most three points.}\end{rmk}

\section{Proof of Proposition \ref{combination}}

To prove Proposition \ref{combination}, we use the following variant of the ping-pong lemma. Similar arguments appear in work of Dey and Kapovich \cite{DK22}, but we include them here for the convenience of the reader.

\begin{lemma}\label{pingpong}
Let $G$ be a Lie group acting continuously on a manifold $\mathcal{F}$. Suppose $\Gamma_1, \Gamma_2 < G$ are infinite\footnote{In fact, our argument requires only that $|\Gamma_i| > 2$ for $i=1,2$. The statement remains true if at least one of the~$\Gamma_i$ has size at least $3$.} and that there are closed nonempty disjoint subsets $C_1, C_2 \subset \mathcal{F}$ such that $\gamma_i C_j \subset C_i$ for $\gamma_i \in \Gamma_i \smallsetminus \{1\}$ and $i \neq j$. Then $\langle \Gamma_1, \Gamma_2 \rangle  < G$ is discrete and decomposes as $\Gamma_1 * \Gamma_2$. 
\end{lemma}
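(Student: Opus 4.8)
The plan is to prove Lemma \ref{pingpong} by the standard ping-pong argument, adapted to the setting where we have a table-tennis configuration on the manifold $\mathcal{F}$. The goal is twofold: to show that the natural homomorphism $\Gamma_1 * \Gamma_2 \to \langle \Gamma_1, \Gamma_2 \rangle$ is injective (so that the amalgam is a free product), and to show that the resulting subgroup is discrete in $G$. Both will follow from carefully analyzing how a reduced word acts on a chosen test point.

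First I would set up the combinatorics. Let $w = s_1 s_2 \cdots s_k$ be a nontrivial reduced word in the free product $\Gamma_1 * \Gamma_2$, meaning each $s_\ell$ lies in $(\Gamma_1 \smallsetminus \{1\}) \cup (\Gamma_2 \smallsetminus \{1\})$ and consecutive letters come from distinct factors. The hypothesis $\gamma_i C_j \subset C_i$ for $i \neq j$ is exactly the table-tennis condition: applying a nontrivial element of $\Gamma_i$ sends the ``other'' set $C_j$ into $C_i$. To run ping-pong I would pick a test point $p$ lying in one of the sets, say $p \in C_2$ if the last letter $s_k$ belongs to $\Gamma_1$ (and symmetrically otherwise), and then track the image of $p$ under successive letters read from the right. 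At each stage the point lands in the set $C_i$ corresponding to the factor of the letter just applied, so that $w \cdot p \in C_{i_1}$, where $i_1$ is the index of the leading letter $s_1$. Since $p$ was chosen in the opposite set $C_{j}$ with $j \neq i_1$, and $C_1 \cap C_2 = \varnothing$, we get $w \cdot p \neq p$, whence $w$ acts nontrivially on $\mathcal{F}$ and therefore $w \neq 1$ in $G$. This gives injectivity and hence the free-product decomposition $\langle \Gamma_1, \Gamma_2 \rangle = \Gamma_1 * \Gamma_2$.

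The case analysis on the first and last letters is where I expect the only real bookkeeping. When the leading and trailing letters lie in the same factor, one cannot directly conclude $w \cdot p \neq p$ by the disjointness of $C_1, C_2$ alone, since $w \cdot p$ and $p$ may both lie in the same $C_i$. The standard fix, and the reason the footnote insists $|\Gamma_i| > 2$, is to conjugate: one chooses an auxiliary letter $g$ in a factor distinct from that of $s_1$, or exploits the extra element guaranteed by $|\Gamma_i| \geq 3$, so as to reduce to the case where the first and last letters lie in different factors. Concretely, I would reduce the general reduced word to a cyclically reduced conjugate whose endpoints straddle both factors, run the argument above, and conclude that $w$ is conjugate to a nontrivial element, hence itself nontrivial. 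This reduction is the main (mild) obstacle in the proof.

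Finally, for discreteness, I would observe that the ping-pong estimate is uniform in a strong sense: pick disjoint open neighborhoods $O_1 \supset C_1$ and $O_2 \supset C_2$ — or simply work with the closed sets directly and a test point in the interior of the complement of $C_1 \cup C_2$ — and note that every nontrivial reduced word $w$ carries a fixed test point $p$ outside a fixed neighborhood of $p$ itself, since $w \cdot p$ lands in one of the $C_i$ while $p$ is bounded away from $C_1 \cup C_2$. By continuity of the $G$-action on $\mathcal{F}$, the set of $g \in G$ with $g \cdot p$ close to $p$ is a neighborhood of the identity containing no nontrivial element of $\langle \Gamma_1, \Gamma_2\rangle$; this exhibits an identity neighborhood meeting the subgroup only in $\{1\}$, which is precisely discreteness. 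I expect no genuine difficulty here beyond making the choice of test point compatible with the word-endpoint case analysis already carried out for injectivity.
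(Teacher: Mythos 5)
Your overall strategy (pointwise ping-pong for injectivity, then a separate argument for discreteness) can be made to work, but as written it contains one consistent inversion and one genuine gap. The inversion: with your choice of test point ($p \in C_2$ when the last letter lies in $\Gamma_1$, and symmetrically), the direct argument succeeds precisely when the first and last letters lie in the \emph{same} factor, since only then is $p$ guaranteed to lie in the set opposite to the one containing $w \cdot p$; it is the words whose endpoints lie in \emph{different} factors that need the conjugation trick. Your stated fix --- reduce to a ``cyclically reduced conjugate whose endpoints straddle both factors'' --- aims at exactly the bad configuration. The conjugation (by an element of the first letter's factor avoiding $s_1^{-1}$, which exists since $|\Gamma_i| > 2$) must instead produce a word whose two endpoints lie in the same factor. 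Since the mechanism you invoke is the right one and the swap is consistent, this part is fixable.

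The genuine gap is in the discreteness paragraph. The hypothesis $\gamma_i C_j \subset C_i$ controls only the images of points lying in $C_1 \cup C_2$; a test point ``in the interior of the complement of $C_1 \cup C_2$'' has a completely uncontrolled orbit, so the claim that $w \cdot p$ lands in one of the $C_i$ for every nontrivial $w$ is false for such $p$. If instead you take $p \in C_2$, then $w \cdot p \in C_1$ is guaranteed only for words whose first and last letters both lie in $\Gamma_1$; for all other words you must conjugate, and conjugation changes the group element whose distance to the identity you are estimating, so the single identity neighborhood $\{g \in G : g \cdot p \in U_p\}$ does not exclude them. One must either intersect that neighborhood with finitely many of its conjugates (by fixed conjugators $\gamma_1, \gamma_1' \in \Gamma_1$), or argue sequentially as the paper does: suppose $w_n \neq 1$ in $\Gamma_1 * \Gamma_2$ with images tending to $1$ in $G$, extract a subsequence along which the endpoint types are constant, use closedness of $C_1$ to pass to the limit ($z = \lim_n w_n z \in C_1$ for $z \in C_2$, contradicting $C_1 \cap C_2 = \varnothing$) in the same-factor case, and in the mixed case conjugate by one of two fixed elements of $\Gamma_1 \smallsetminus \{1\}$ (here $|\Gamma_1| \geq 3$ enters), noting that $\gamma w_n \gamma^{-1} \to 1$ still. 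The paper's formulation handles injectivity and discreteness in a single stroke precisely because this uniformity over all word types is the crux of the lemma; it is exactly what your ``I expect no genuine difficulty here'' elides.
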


\begin{proof}
Let $\rho: \Gamma_1 * \Gamma_2 \rightarrow G$ be the map induced by the inclusions $\Gamma_i \subset G$ for $i=1,2$. Take a sequence $w_n \in \Gamma_1 * \Gamma_2$ and suppose for a contradiction that $w_n \neq 1$ for any $n \in \mathbb{N}$ but $\lim_n\rho(w_n) = 1 \in G$. Up to relabeling $\Gamma_1$ and $\Gamma_2$ and extracting a subsequence of $(w_n)_n$, we may assume that for some fixed $i \in \{1,2\}$ and each $n \in \mathbb{N}$, the first letter (read from the left) in the canonical form of $w_n$ belongs to $\Gamma_1 \smallsetminus \{1\}$ and the last belongs to $\Gamma_i \smallsetminus \{1\}$. 

Suppose first that $i=1$. Then $\rho(w_n)C_2 \subset C_1$ for each $n \in \mathbb{N}$. Selecting some $z \in C_2$, we then have $z = \lim_n \rho(w_n) z \in C_1$ since $C_1$ is closed, so that $z \in C_1 \cap C_2$, a contradiction.

Now suppose that $i=2$. Pick an element $\gamma_1 \in \Gamma_1 \smallsetminus \{1\}$, and let $w_n' = \gamma_1 w_n \gamma_1^{-1}$ for $n \in \mathbb{N}$. Note that we still have $\lim_n \rho(w_n') = 1$. If for some subsequence $(w'_{n_k})_k$ of $(w_n')_n$ the canonical form of~$w'_{n_k}$ has odd length for each $k \in \mathbb{N}$, then one obtains a contradiction as in the previous paragraph. Otherwise, there is some $N \in \mathbb{N}$ such that the first letter (read from the left) in the canonical form of~$w_n$ is $\gamma_1^{-1}$ for $n \geq N$. Now select $\gamma'_1 \in \Gamma_1 \smallsetminus \{1, \gamma_1\}$, and let $w''_n = \gamma_1' w_n (\gamma_1')^{-1}$ for $n \in \mathbb{N}$. Then again we have $\lim_n \rho(w_n'') = 1$, but now the canonical form of $w''_n$ has odd length for $n \geq N$, so that we arrive at a contradiction as in the previous paragraph. 
\end{proof}

\begin{proof}[Proof of Proposition \ref{combination}]
Since we have assumed that there is a point in $G/P$ opposite to each point in $\Lambda_\Delta^P$, we can find a compact neighborhood $W_0$ of $\Lambda_\Delta^P$ and a compact subset $U \subset G/P$ with nonempty interior such that $U$ and $W_0$ are antipodal; see \cite[Lem.~4.24]{DKL}. As in \cite[Rmk.~6.4]{DK22}, we have by $P$-regularity of $\Delta$ that $\delta U \subset W_0$ for each nontrivial element $\delta \in \Delta$ apart from a finite list $\delta_1, \ldots, \delta_k \in \Delta \smallsetminus \{1\}$.

For $i =1, \ldots, k$, let $Z_i$ be the set of all $z \in G/P$ such that $z$ is not opposite to $\delta_i z$. Since each of the $Z_i$ is a proper algebraic subset of $G/P$, we have that $U \smallsetminus \bigcup_{i=1}^k Z_i$ has nonempty interior. We can thus find a compact subset $V \subset U \smallsetminus \bigcup_{i=1}^k Z_i$ with nonempty interior such that $V$ and $\delta_i V$ are antipodal for $i=1, \ldots, k$. Setting $W = W_0 \cup \bigcup_{i=1}^k \delta_i V$, we then have that $V$ and $W$ remain antipodal in $G/P$. 

Now since $\Gamma$ is a lattice in $G$, there is an element $g \in \Gamma$ generating a $P$-regular cyclic subgroup with $\Lambda_{\langle g \rangle}^P \subset \mathring V$ (one can always choose {\em $P$-proximal} such $g \in \Gamma$, the existence of which already follows, for instance, from \cite[Lemma~1]{selberg60}). There is then some $N \in \mathbb{N}$ such that $g^n W \subset V$ for all $n \in \mathbb{Z}$ with $|n| \geq N$. Moreover, by design, we have $\delta V \subset W$ for each $\delta \in \Delta \smallsetminus \{1\}$. Setting $\gamma = g^N$, we conclude from Lemma \ref{pingpong} that $\langle \Delta, \gamma \rangle < \Gamma$ decomposes as $\Delta * \langle \gamma \rangle$. \end{proof}

\bibliography{regularbiblio}{}

\begin{thebibliography}{10}

\bibitem{benoist1997proprietes}
{\sc Y.~Benoist}, {\em Propri\'{e}t\'{e}s asymptotiques des groupes
  lin\'{e}aires}, Geom. Funct. Anal., 7 (1997), pp.~1--47.

\bibitem{benoist2021survey}
{\sc Y.~Benoist}, {\em Arithmeticity of discrete subgroups}, Ergodic Theory
  Dynam. Systems, 41 (2021), pp.~2561--2590.

\bibitem{ct20}
{\sc R.~Canary and K.~Tsouvalas}, {\em Topological restrictions on {A}nosov
  representations}, J. Topol., 13 (2020), pp.~1497--1520.

\bibitem{DK22}
{\sc S.~Dey and M.~Kapovich}, {\em Klein--{M}askit combination theorem for
  {A}nosov subgroups: Free products}, arXiv preprint arXiv:2205.03919,  (2022).

\bibitem{DKL}
{\sc S.~Dey, M.~Kapovich, and B.~Leeb}, {\em A combination theorem for {A}nosov
  subgroups}, Math. Z., 293 (2019), pp.~551--578.

\bibitem{Fu67}
{\sc H.~Furstenberg}, {\em Poisson boundaries and envelopes of discrete
  groups}, Bull. Amer. Math. Soc., 73 (1967), pp.~350--356.

\bibitem{GW12}
{\sc O.~Guichard and A.~Wienhard}, {\em Anosov representations: domains of
  discontinuity and applications}, Invent. Math., 190 (2012), pp.~357--438.

\bibitem{kapovich2023list}
{\sc M.~Kapovich, A.~Detinko, and A.~Kontorovich}, {\em List of problems on
  discrete subgroups of {L}ie groups and their computational aspects}, in
  Computational aspects of discrete subgroups of {L}ie groups, vol.~783 of
  Contemp. Math., Amer. Math. Soc., [Providence], RI, [2023] \copyright 2023,
  pp.~113--126.

\bibitem{KL}
{\sc M.~Kapovich and B.~Leeb}, {\em Relativizing characterizations of {A}nosov
  subgroups, {I}}, arXiv preprint arXiv:1807.00160,  (2018).

\bibitem{KLP17}
{\sc M.~Kapovich, B.~Leeb, and J.~Porti}, {\em Anosov subgroups: dynamical and
  geometric characterizations}, Eur. J. Math., 3 (2017), pp.~808--898.

\bibitem{Kazhdan}
{\sc D.~A. Ka\v{z}dan}, {\em On the connection of the dual space of a group
  with the structure of its closed subgroups}, Funkcional. Anal. i
  Prilo\v{z}en., 1 (1967), pp.~71--74.

\bibitem{Labourie}
{\sc F.~Labourie}, {\em Anosov flows, surface groups and curves in projective
  space}, Invent. Math., 165 (2006), pp.~51--114.

\bibitem{oh96}
{\sc H.~Oh}, {\em Discrete subgroups of {${\rm SL}_n(\bold R)$} generated by
  lattices in horospherical subgroups}, C. R. Acad. Sci. Paris S\'{e}r. I
  Math., 323 (1996), pp.~1219--1224.

\bibitem{oh99}
\leavevmode\vrule height 2pt depth -1.6pt width 23pt, {\em On discrete
  subgroups containing a lattice in a horospherical subgroup}, Israel J. Math.,
  110 (1999), pp.~333--340.

\bibitem{selberg60}
{\sc A.~Selberg}, {\em On discontinuous groups in higher-dimensional symmetric
  spaces}, in Contributions to function theory ({I}nternat. {C}olloq.
  {F}unction {T}heory, {B}ombay, 1960), Tata Institute of Fundamental Research,
  Bombay, 1960, pp.~147--164.

\bibitem{TZ}
{\sc K.~Tsouvalas and F.~Zhu}, {\em Topological restrictions on relative
  {A}nosov representations}.
\newblock In preparation.

\bibitem{Zhu}
{\sc F.~Zhu}, {\em Relatively dominated representations}, Ann. Inst. Fourier
  (Grenoble), 71 (2021), pp.~2169--2235.

\end{thebibliography}
\bibliographystyle{siam}

\end{document}